\newtheorem{theorem}{Theorem}
\newtheorem{corollary}[theorem]{Corollary}
\newtheorem{proposition}[theorem]{Proposition}
\newcommand{\des}{{\rm des\,}}
\newcommand{\mbn}{{\mathcal B}_n}
\newcommand{\msn}{{\mathcal S}_n}
\newcommand{\rz}{{\rm RZ}}
\newcommand{\lrf}[1]{\lfloor #1\rfloor}
\newcommand{\lrc}[1]{\lceil #1\rceil}
\title{A family of two-variable derivative polynomials for tangent and secant}
\author
{Shi-Mei Ma \footnote{ {\it Email address:}
shimeima@yahoo.com.cn (S.-M. Ma)} }
\date{\footnotesize Department of Information and Computing Science,
        Northeastern University at Qinhuangdao,\\ Hebei 066004,
        China}
\begin{document}

\maketitle

\begin{abstract}
In this paper we introduce a family of two-variable derivative polynomials for tangent and secant.
We study the generating functions for the coefficients of this family of polynomials. In particular, we establish a connection between these generating functions and Eulerian polynomials.
\bigskip\\
{\sl Keywords:}\quad Derivative polynomials; Eulerian polynomials
\end{abstract}
\section{Introduction}
Throughout this paper, we define $y=\tan(x)$ and $z=\sec(x)$. Denote by $D$ the differential operator ${d}/{d x}$.
Thus $D(y)=z^2$ and $D(z)=yz$. An important tangent identity is given by $$1+y^2=z^2.$$
In 1995, Hoffman~\cite{Hoffman95} considered two sequences of {\it derivative polynomials} defined respectively by
\begin{equation*}\label{derivapoly-1}
D^n(y)=P_n(y)\quad {\text and}\quad D^n(z)=z Q_n(y)
\end{equation*}
for $n\geq 0$.
From the chain rule it follows that the polynomials $P_n(u)$ satisfy $P_0(u)=u$ and $P_{n+1}(u)=(1+u^2)P_n'(u)$, and similarly $Q_0(u)=1$ and $Q_{n+1}(u)=(1+u^2)Q_n'(u)+uQ_n(u)$. The first few of the polynomials $P_n(u)$ are
\begin{align*}
P_1(u)&=1+u^2,\\
P_2(u)&=2u+2u^3,\\
P_3(u)&=2+8u^2+6u^4.
\end{align*}
Various refinements of the polynomials $P_n(u)$ and $Q_n(u)$ have been pursued by several authors,
see~\cite{Carlitz72,Cvijovic09,Franssens07,Hoffman99,Josuat10,Ma12} for instance.

Let $\msn$ denote the symmetric group of all permutations of $[n]$, where $[n]=\{1,2,\ldots,n\}$.
A permutation $\pi=\pi(1)\pi(2)\cdots\pi(n)\in\msn$
is {\it alternating} if $\pi(1)>\pi(2)<\cdots \pi(n)$. In other words, $\pi(i)<\pi({i+1})$ if $i$ is even and $\pi(i)>\pi({i+1})$ if $i$ is odd.
Let $E_n$ denote the number of alternating permutations in $\msn$. The number $E_n$ is called an {\it Euler number} because Euler considered the numbers $E_{2n+1}$. There has been a huge literature on Euler numbers (see~\cite{Stanley} for details).
In 1879, Andr\'e~\cite{Andre79} obtained that
$$y+z=\sum_{n=0}^\infty E_n\frac{x^n}{n!}.$$
Since the tangent is an odd function and the secant is an even function, we have
$$y=\sum_{n=0}^\infty E_{2n+1}\frac{x^{2n+1}}{(2n+1)!} \quad{\text and} \quad z=\sum_{n=0}^\infty E_{2n}\frac{x^{2n}}{(2n)!}.$$
For this reason the integers $E_{2n+1}$ are sometimes called the {\it tangent numbers} and the integers $E_{2n}$ are called the {\it secant numbers}.

Let $E(x)=y+z$. Clearly, $E(0)=1$.
It is easy to verify that
\begin{equation}\label{D-1}
2D(E(x))=1+E^2(x).
\end{equation}
Consider the derivative of~(\ref{D-1}) with respect to $x$, we have
\begin{equation}\label{D-2}
2^2D^2(E(x))=2E(x)+2E^3(x).
\end{equation}
By the derivative of~(\ref{D-2}) with respect to $x$, we get
$2^3D^3(E(x))=2+8E^2(x)+6E^4(x)$.
We now present a connection between $E(x)$ and $P_n(u)$
\begin{proposition}
For $n\geq 0$, we have $2^nD^n(E(x))=P_n(E(x))$.
\end{proposition}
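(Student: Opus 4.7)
\medskip

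My plan is to prove the proposition by induction on $n$, with the key ingredient being the identity~(\ref{D-1}) combined with the defining recurrence $P_{n+1}(u)=(1+u^2)P_n'(u)$.

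For the base case $n=0$, we have $2^0 D^0(E(x)) = E(x) = P_0(E(x))$, since $P_0(u)=u$. This matches directly.

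For the inductive step, I would assume $2^n D^n(E(x)) = P_n(E(x))$ and differentiate both sides with respect to $x$. By the chain rule, the right-hand side becomes $P_n'(E(x)) \cdot D(E(x))$. Then I would invoke~(\ref{D-1}) in the form $D(E(x)) = \tfrac{1}{2}(1+E^2(x))$ to rewrite this as $\tfrac{1}{2}(1+E^2(x))P_n'(E(x))$. Multiplying through by $2$, we get
\[
2^{n+1}D^{n+1}(E(x)) = (1+E^2(x))\,P_n'(E(x)),
\]
and the right-hand side is exactly $P_{n+1}(E(x))$ by the defining recurrence for $P_n$. This closes the induction.

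There is no real obstacle here; the whole point is that the identity $2D(E) = 1+E^2$ plays the role of the chain-rule identity $D(y) = 1+y^2$ up to the factor of $2$, and this factor of $2$ is precisely what gets absorbed into the power $2^n$ on the left-hand side at each inductive step. The only thing to be careful about is keeping track of the factors of $2$ correctly when differentiating and applying~(\ref{D-1}).
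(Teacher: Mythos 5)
Your proof is correct and follows essentially the same route as the paper: induction on $n$, differentiating the inductive hypothesis and using $2D(E(x))=1+E^2(x)$ together with the recurrence $P_{n+1}(u)=(1+u^2)P_n'(u)$. Your explicit base case at $n=0$ is if anything slightly cleaner than the paper's appeal to the previously verified small cases.
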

\begin{proof}
It suffices to consider the case $n\geq 3$.
We proceed by induction on $n$. Assume that the statement is true for $n=k$.
Then
\begin{align*}
2^{k+1}D^{k+1}(E(x))&=2D(P_k(E(x)))\\
                    &=2P_k'(E(x))D(E(x))\\
                    &=(1+E^2(x))P_k'(E(x))\\
                    &=P_{k+1}(E(x)).
\end{align*}
Thus the statement is true for $k+1$, as desired.
\end{proof}

In~\cite{Ma12} we write the derivative polynomials in terms of $y$ and $z$ as follows:
\begin{equation*}
D^n(y)=\sum_{k=0}^{\lrf{({n-1})/{2}}}W_{n,k}y^{n-2k-1}z^{2k+2}\quad {\text and}\quad D^n(z)=\sum_{k=0}^{\lrf{{n}/{2}}}W_{n,k}^{\textit{l}}y^{n-2k}z^{2k+1} \quad {\text for}\quad n\geq 1.
\end{equation*}
In particular, we observed that the coefficients $W_{n,k}$ and $W_{n,k}^{\textit{l}}$ have simple combinatorial interpretations. The coefficient $W_{n,k}$ is the number of permutations in $\msn$ with $k$ interior peaks, where an interior peak of $\pi$ is an index $2\leq i\leq n-1$ such that $\pi(i-1)<\pi(i)>\pi(i+1)$.
The coefficient $W_{n,k}^{\textit{l}}$ is the number of permutations in $\msn$ with $k$ left peaks, where a left peak of $\pi$ is either an interior peak or else the index $1$ in the case $\pi(1)>\pi(2)$ (see~\cite{Petersen09} for instance).

In this paper we are concerned with a variation of the above definitions.
The organization of this paper is as follows. In Section~\ref{Section-2}, we collect some notation, definitions and results that will be needed in the rest of the paper. In Section~\ref{Section-3}, we establish a connection between
Eulerian numbers and the expansion of $(Dy)^n(y)$.
 In Section~\ref{Section-4}, we establish a connection between
Eulerian numbers of type $B$ and the expansion of $(Dy)^n(z)$.
In Section~\ref{Section-5}, we study some polynomials related to $(yD)^n(y)$ and $(yD)^n(z)$.
\section{Preliminaries}\label{Section-2}
For a permutation $\pi\in\msn$, we define a {\it descent} to be a position $i$ such that $\pi(i)>\pi(i+1)$. Denote by $\des(\pi)$ the number of descents of $\pi$. Let
\begin{equation*}
A_n(x)=\sum_{\pi\in\msn}x^{\des(\pi)+1}=\sum_{k=1}^nA(n,k)x^{k},
\end{equation*}
The polynomial $A_n(x)$ is called an {\it Eulerian polynomial}, while $A(n,k)$ is called an {\it Eulerian number}.
The exponential generating function for $A_n(x)$ is
\begin{equation}\label{Axz}
A(x,z)=1+\sum_{n\geq 1}A_n(x)\frac{t^n}{n!}=\frac{1-x}{1-xe^{t(1-x)}}.
\end{equation}
The numbers $A(n,k)$ satisfy the recurrence
\begin{equation}\label{recu-Euleriannum}
A(n+1,k)=kA(n,k)+(n-k+2)A(n,k-1)
\end{equation}
with the initial conditions $A(0,0)=1$ and $A(0,k)=0$ for $k\geq 1$ (see~\cite[A008292]{Sloane} for details).
The first few of the Eulerian polynomials $A_n(x)$ are
$$A_0(x)=1,A_1(x)=x,A_2(x)=x+x^2,A_3(x)=x+4x^2+x^3.$$
It is well known that
\begin{equation}\label{symmetric}
A_n(x)=x^{n+1}A_n\left(\frac{1}{x}\right).
\end{equation}
An explicit formula for $A(n,k)$ is given as follows:
\begin{equation*}
A(n,k)=\sum_{i=0}^k(-1)^i\binom{n+1}{i}(k-i)^n.
\end{equation*}

Let $B_n$ denote the set of signed permutations of $\pm[n]$ such that $\pi(-i)=-\pi(i)$ for all $i$, where $\pm[n]=\{\pm1,\pm2,\ldots,\pm n\}$.
Let
$${B}_n(x)=\sum_{k=0}^nB(n,k)x^{k}=\sum_{\pi\in \mbn}x^{\des_B(\pi)},$$
where
$$\des_B=|\{i\in[n]:\omega(i-1)>\omega({i+1})\}|$$
with $\pi(0)=0$.
The polynomial $B_n(x)$ is called an {\it Eulerian polynomial of type $B$}, while $B(n,k)$ is called an {\it Eulerian number of type $B$}.
Below are the polynomials ${B}_n(x)$ for $n\leq 3$:
$$B_0(x)=1,B_1(x)=1+x,B_2(x)=1+6x+x^2,B_3(x)=1+23x+23x^2+x^3.$$
The numbers $B(n,k)$ satisfy
the recurrence relation
\begin{equation}\label{Bnk-Euleriannum}
B(n+1,k)=(2k+1)B(n,k)+(2n-2k+3)B(n,k-1),
\end{equation}
with the initial conditions $B(0,0)=1$ and $B(0,k)=0$ for $k\geq 1$.
An explicit formula for $B(n,k)$ is given as follows:
\begin{equation*}
B(n,k)=\sum_{i=0}^k(-1)^{i}\binom{n+1}{i}(2k-2i+1)^{n}
\end{equation*}
for $0\leq k\leq n$ (see~\cite{Eriksen00} for details).

For $n\geq 0$, we always assume that
$$(Dy)^{n+1}(y)=(Dy)(Dy)^n(y)=D(y(Dy)^n(y)),$$
$$(Dy)^{n+1}(z)=(Dy)(Dy)^n(z)=D(y(Dy)^n(z)),$$
$$(yD)^{n+1}(y)=(yD)(yD)^n(y)=yD((yD)^n(y)),$$
$$(yD)^{n+1}(z)=(yD)(yD)^n(z)=yD((yD)^n(z)).$$

Clearly, $(Dy)^n(y+z)=(Dy)^n(y)+(Dy)^n(z)$.
For $n\geq 1$,
we define
\begin{equation*}\label{def-derivative-2}
(Dy)^n(y+z)=\sum_{k=0}^{2n}J(2n,k)y^{2n-k}z^{k+1}.
\end{equation*}
In Section~\ref{Section-3} and Section~\ref{Section-4}, we respectively obtain the following results:
$$J(2n,2k-1)=2^nA(n,k)  \quad {\text for}\quad 1\leq k\leq n,$$
and $$J(2n,2k)=B(n,k) \quad {\text for}\quad 0\leq k\leq n.$$
Let $J_n(x)=\sum_{k=0}^{2n}J(2n,k)x^k$ for $n\geq 1$.
Then $xJ_n(x)=2^nA_n(x^2)+xB_n(x^2)$.
Therefore, from~\cite[Theorem 3]{Ma12}, we have
\begin{equation}\label{JnxAnx}
xJ_n(x)=(1+x)^{n+1}A_n(x).
\end{equation}

Using~(\ref{JnxAnx}), we immediately obtain the following result.
\begin{proposition}
For $n\geq 1$, we have
$$(Dy)^n(y+z)=(y+z)^{n+1}\sum_{k=1}^nA(n,k)y^{n-k}z^k.$$
\end{proposition}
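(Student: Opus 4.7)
The plan is to observe that the proposition is precisely the homogeneous two-variable form of the already-established identity~(\ref{JnxAnx}). First, I would verify that both sides are homogeneous polynomials in $y,z$ of total degree $2n+1$. For the right-hand side this is immediate, since $(y+z)^{n+1}$ has degree $n+1$ and $\sum_{k=1}^n A(n,k) y^{n-k} z^k$ has degree $n$. For the left-hand side, since $D(y)=z^2$ and $D(z)=yz$, the chain rule gives $D(y^{a}z^{b})=ay^{a-1}z^{b+2}+by^{a+1}z^{b}$, so $D$ raises the total $(y,z)$-degree by exactly $1$. Each application of $Dy$ (multiply by $y$, then apply $D$) therefore raises the degree by $2$, and starting from the degree-$1$ form $y+z$ one obtains that $(Dy)^n(y+z)$ is homogeneous of degree $2n+1$.

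Next, I would specialize $y=1$ and $z=x$. On the left this yields $\sum_{k=0}^{2n}J(2n,k)x^{k+1}=xJ_n(x)$, while on the right it gives $(1+x)^{n+1}A_n(x)$. The resulting scalar identity $xJ_n(x)=(1+x)^{n+1}A_n(x)$ is exactly~(\ref{JnxAnx}). Since two homogeneous polynomials in $y,z$ of the same total degree that agree after the substitution $(y,z)=(1,x)$ must have identical coefficients (compare the coefficient of $x^j$ on each side), the proposition follows at once.

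There is essentially no obstacle: all of the combinatorial and analytic content is packaged inside the earlier formula~(\ref{JnxAnx}), and the remainder is the routine passage from a one-variable identity to its homogeneous two-variable counterpart. The one point that calls for a brief sentence of argument is the degree count establishing homogeneity of $(Dy)^n(y+z)$, and even this reduces to the observation that $D$ acts as a degree-$(+1)$ operator on $\mathbb{R}[y,z]$ under the relations $D(y)=z^2$ and $D(z)=yz$.
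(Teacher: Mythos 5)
Your proof is correct and follows the same route as the paper, which deduces the proposition directly from the identity $xJ_n(x)=(1+x)^{n+1}A_n(x)$; you simply make explicit the routine homogenization step (degree count plus the substitution $(y,z)=(1,x)$) that the paper leaves as ``immediate.''
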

\section{On the expansion of $(Dy)^n(y)$}\label{Section-3}
For $n\geq 1$, we define
\begin{equation}\label{def-derivative-1}
(Dy)^n(y)=\sum_{k=1}^{n}E(n,k)y^{2n-2k+1}z^{2k}.
\end{equation}
\begin{theorem}\label{thm1}
For $1\leq k\leq n$, we have
$E(n,k)=2^nA(n,k)$, where $A(n,k)$ is an Eulerian number.
\end{theorem}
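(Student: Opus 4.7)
The plan is to derive a recurrence for the coefficients $E(n,k)$ directly from the definition~(\ref{def-derivative-1}) and check that it matches, after rescaling by $2^n$, the Eulerian recurrence~(\ref{recu-Euleriannum}). The initial value $n=1$ gives $(Dy)(y)=D(y^2)=2yz^2$, so $E(1,1)=2=2^1 A(1,1)$, which is the base case.

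For the inductive step, I would apply $(Dy)$ to the expansion~(\ref{def-derivative-1}) termwise. Using the rules $D(y)=z^2$ and $D(z)=yz$, each summand transforms as
\begin{equation*}
D\bigl(y\cdot y^{2n-2k+1}z^{2k}\bigr)=D\bigl(y^{2n-2k+2}z^{2k}\bigr)=(2n-2k+2)\,y^{2n-2k+1}z^{2k+2}+2k\,y^{2n-2k+3}z^{2k}.
\end{equation*}
Summing over $k$ and collecting by the monomial $y^{2(n+1)-2k+1}z^{2k}$, the first piece contributes when the old index equals $k-1$ and the second piece contributes at the old index $k$, giving the recurrence
\begin{equation*}
E(n+1,k)=2k\,E(n,k)+(2n-2k+4)\,E(n,k-1).
\end{equation*}

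Dividing both sides by $2^{n+1}$, this becomes exactly $A(n+1,k)=kA(n,k)+(n-k+2)A(n,k-1)$ for the candidate sequence $A(n,k):=E(n,k)/2^n$. Combined with the matching initial conditions, this forces $E(n,k)=2^nA(n,k)$ by induction.

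I do not expect any serious obstacle. The only thing that requires care is bookkeeping of the indices $k$ and $k-1$ after reindexing the shifted sum; a boundary check (the terms $k=0$ and $k=n+1$ vanish on both sides because $E(n,0)=E(n,n+1)=0$ and $A(n,0)=A(n,n+1)=0$) confirms that the recurrence holds over the full range $1\le k\le n+1$.
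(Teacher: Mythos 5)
Your proposal is correct and follows essentially the same route as the paper: verify the base case $E(1,1)=2A(1,1)$, apply $D(y\,\cdot)$ termwise to the expansion to obtain $E(n+1,k)=2kE(n,k)+(2n-2k+4)E(n,k-1)$, and compare with the Eulerian recurrence~(\ref{recu-Euleriannum}) after dividing by $2^{n+1}$. The index bookkeeping and boundary checks you note are exactly the (implicit) content of the paper's argument.
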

\begin{proof}
Note that $D(y^2)=2yz^2$. Then $E(1,1)=2A(1,1)$.
Since
$$(Dy)^{n+1}(y)=D(y(Dy)^n(y))=2\sum_{k=1}^nkE(n,k)y^{2n-2k+3}z^{2k}+
2\sum_{k=1}^n(n-k+1)E(n,k)y^{2n-2k+1}z^{2k+2},$$
there follows
\begin{equation}\label{T-recurrence}
E(n+1,k)=2(kE(n,k)+(n-k+2)E(n,k-1)).
\end{equation}
Comparison of~(\ref{T-recurrence}) with~(\ref{recu-Euleriannum}) gives the desired result.
\end{proof}

Let
\begin{equation}\label{Def-22}
F_n(y)=(Dy)^n(y)=\sum_{k=0}^nF(n,k)y^{2k+1}.
\end{equation}
By $F_{n+1}(y)=D(yF_n(y))$, we obtain
\begin{equation}\label{bny-recu}
F_{n+1}(y)=(1+y^2)F_n(y)+y(1+y^2)F'_n(y)
\end{equation}
with initial values $F_0(y)=y$. We define $F_n(y)=2^na_n(y)$ for $n\geq 0$.
In the following we present an explicit formula for $a_n(y)$.
\begin{theorem}\label{thm2}
For $n\geq 1$, we have
\begin{equation}\label{explicit-any}
a_n(y)=\sum_{k=1}^nA(n,k)y^{2n-2k+1}(1+y^2)^k.
\end{equation}
\end{theorem}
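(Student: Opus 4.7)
The claim can be proved essentially without any new work by combining Theorem~\ref{thm1} with the tangent identity $z^2 = 1+y^2$. The plan is as follows.

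First, recall that by definition \eqref{def-derivative-1} and Theorem~\ref{thm1}, one has
\begin{equation*}
(Dy)^n(y) = \sum_{k=1}^{n} E(n,k) y^{2n-2k+1} z^{2k} = 2^n \sum_{k=1}^{n} A(n,k) y^{2n-2k+1} z^{2k}.
\end{equation*}
Now observe that in the expression on the right, $z$ appears only through the even powers $z^{2k}$. Using $z^2 = 1+y^2$, every such factor equals $(1+y^2)^k$, and hence the right-hand side is actually a polynomial in $y$ alone. Comparing this rewriting with the defining relation $F_n(y) = (Dy)^n(y) = 2^n a_n(y)$ in \eqref{Def-22} and dividing by $2^n$ immediately yields \eqref{explicit-any}.

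If one prefers to avoid the identification $(Dy)^n(y) = \sum E(n,k) y^{2n-2k+1} z^{2k}$ and work purely with the polynomial recurrence \eqref{bny-recu}, the same conclusion can be obtained by induction on $n$. The base case $n=1$ gives $a_1(y) = y + y^3 = A(1,1) y (1+y^2)$. For the inductive step, one writes the recurrence \eqref{bny-recu} in the form
\begin{equation*}
2 a_{n+1}(y) = (1+y^2) a_n(y) + y(1+y^2) a_n'(y),
\end{equation*}
substitutes the inductive formula for $a_n(y)$, differentiates term by term, and collects coefficients of $y^{2(n+1)-2j+1}(1+y^2)^j$. The collection produces exactly the combination $jA(n,j) + (n-j+2)A(n,j-1)$, which by the Eulerian recurrence \eqref{recu-Euleriannum} equals $A(n+1,j)$, completing the induction.

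Both routes are routine; the only mildly delicate point in the inductive version is carefully re-indexing after distributing $y(1+y^2)$ against the derivative $a_n'(y)$, since the terms containing $(1+y^2)^{k-1}$ coming from differentiating $(1+y^2)^k$ need to be matched, after the shift $k \mapsto k+1$, with the contribution of the undifferentiated piece $(1+y^2) a_n(y)$. Given that this bookkeeping is straightforward, I would present the short derivation via Theorem~\ref{thm1} as the main proof and mention the inductive alternative only if the author wants a self-contained argument that does not invoke the $y,z$ expansion.
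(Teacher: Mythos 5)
Your primary argument---substituting $z^{2k}=(1+y^2)^k$ into the expansion $(Dy)^n(y)=2^n\sum_{k=1}^{n}A(n,k)\,y^{2n-2k+1}z^{2k}$ furnished by the identity $E(n,k)=2^nA(n,k)$ and then dividing by $2^n$ in view of $F_n(y)=2^na_n(y)$---is correct and is essentially the paper's own proof; the paper merely takes a small detour, first writing the sum in the reversed form $\sum_{k}E(n,k)y^{2k-1}(1+y^2)^{n-k+1}$ and then undoing the reversal via the symmetry $A_n(x)=x^{n+1}A_n(1/x)$ from~\eqref{symmetric}, so your version is if anything more direct. Your inductive alternative based on~\eqref{bny-recu} and the Eulerian recurrence~\eqref{recu-Euleriannum} also checks out (the collected coefficient of $y^{2n-2j+3}(1+y^2)^j$ is indeed $2\bigl(jA(n,j)+(n-j+2)A(n,j-1)\bigr)=2A(n+1,j)$), but it is not needed.
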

\begin{proof}
Combining~(\ref{def-derivative-1}) and~(\ref{Def-22}), we get
\begin{align*}
F_n(y)&=\sum_{k=1}^{n}E(n,k)y^{2k-1}(1+y^2)^{n-k+1}\\
      &=2^ny^{-1}(1+y^2)^{n+1}\sum_{k=1}^nA(n,k)\left(\frac{y^2}{1+y^2}\right)^k.
\end{align*}
It follows from~(\ref{symmetric}) that
\begin{align*}
a_n(y)&=y^{2n+1}\left(\frac{1+y^2}{y^2}\right)^{n+1}\sum_{k=1}^nA(n,k)\left(\frac{y^2}{1+y^2}\right)^k\\
      &=\sum_{k=1}^nA(n,k)y^{2n-2k+1}(1+y^2)^k.
\end{align*}
This completes the proof.
\end{proof}

Let $a_n(y)=\sum_{k=0}^na(n,k)y^{2k+1}$.
Equating the coefficients of $y^{2n-2k+1}$ on both sides of~(\ref{explicit-any}), we obtain
$$a(n,n-k)=\sum_{i=k}^n\binom{i}{k}A(n,i).$$
It follows from~(\ref{bny-recu}) that
\begin{equation}\label{ank}
a(n+1,k)=(k+1)a(n,k)+ka(n,k-1).
\end{equation}
We define $W_n(x)=\sum_{k=0}^na(n,k)x^{k+1}$. By~(\ref{ank}), we have
\begin{equation}\label{any-recurrence}
W_{n+1}(x)=(x+x^2)W'_n(x),
\end{equation}
with initial values $W_{0}(x)=x$.
The first few terms of $W_n(y)$ are given as follows:
\begin{align*}
W_1(x)&=x+x^2,\\
W_2(x)&=x+3x^2+2x^3,\\
W_3(x)&=x+7x^2+12x^3+6x^4,\\
W_4(x)&=x+15x^2+50x^3+60x^4+24x^5.
\end{align*}
The triangular array $\{a(n,k)\}_{n\geq 0, 0\leq k\leq n}$ is called a {\it Worpitzky triangle}, and it has been extensively studied by many authors (see~\cite[\textsf{A028246}]{Sloane}).

In view of~(\ref{any-recurrence}),
it is natural to consider the expansion of the operator $((x+x^2)D)^n$.
We define
\begin{equation}\label{def-Tnkx}
((x+x^2)D)^n=\sum_{k=1}^nG_{n,k}(x)(x+x^2)^kD^k \quad\textrm{for $n\ge 1$}¡£
\end{equation}
Applying the operator $(x+x^2)D$ on the left of~(\ref{def-Tnkx}), we get
\begin{equation}\label{Tnkx-1}
G_{n+1,k}(x)=k(1+2x)G_{n,k}(x)+(x+x^2)D(G_{n,k}(x))+G_{n,k-1}(x).
\end{equation}
On the other hand, since
$$D^k((x+x^2)D)=(x+x^2)D^{k+1}+k(1+2x)D^{k}+k(k-1)D^{k-1}.$$
Applying the operator $(x+x^2)D$ on the right of~(\ref{def-Tnkx}), we get
\begin{equation}\label{Tnkx-2}
G_{n+1,k}(x)=k(1+2x)G_{n,k}(x)+k(k+1)(x+x^2)G_{n,k+1}(x)+G_{n,k-1}(x).
\end{equation}
Comparison of~(\ref{Tnkx-1}) with~(\ref{Tnkx-2}) gives $D(G_{n,k}(x))=k(k+1)G_{n,k+1}(x)$.
Thus
\begin{equation*}\label{TnkxD}
G_{n,k}(x)=\frac{1}{k!(k-1)!}D^{k-1}(G_{n,1}(x)).
\end{equation*}
Thus $\deg G_{n,k}(x)=n-k$.
For $k=1$, set $G_{n}(x)=G_{n,1}(x)$.
Then~(\ref{Tnkx-1}) reduces to
\begin{equation*}\label{Tnx-recu}
G_{n+1}(x)=(1+2x)G_{n}(x)+(x+x^2)D(G_{n}(x))
\end{equation*}
with initial values $G_1(x)=1$.
We define $G_{n}(x)=\sum_{k=1}^nG(n,k)x^{k-1}$. It is easy to verify that
\begin{equation}\label{Tnk-recurrence}
G(n+1,k)=kG(n,k)+kG(n,k-1)
\end{equation}
with $G(1,1)=1$.
The {\it Stirling numbers of the second kind}, denoted by $S(n,k)$, may be defined by
the recurrence relation
\begin{equation}\label{recu-Stirling}
S(n+1,k)=kS(n,k)+S(n,k-1)
\end{equation}
with the initial conditions $S(0,0)=1$ and $S(n,0)=0$ for $n\geq 1$ (see~\cite[A008277 ]{Sloane} for details).
Comparison of~(\ref{Tnk-recurrence}) with~(\ref{recu-Stirling}) gives the following result.
\begin{proposition}
For $1\leq k\leq n$, we have
$G(n,k)=k!S(n,k)$.
\end{proposition}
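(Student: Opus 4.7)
The plan is a direct induction on $n$, driven entirely by the shape of the two recurrences. The identity to prove, $G(n,k)=k!\,S(n,k)$, turns the Stirling recurrence~(\ref{recu-Stirling}) into the $G$-recurrence~(\ref{Tnk-recurrence}) after multiplying by $k!$, so the whole argument reduces to checking that both sides have the same initial data and then invoking the induction hypothesis.

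First I would handle the base case $n=1$: the only admissible $k$ is $k=1$, and $G(1,1)=1=1!\,S(1,1)$. For the inductive step, assume $G(n,j)=j!\,S(n,j)$ for all $1\le j\le n$ (together with the boundary convention $G(n,0)=0=S(n,0)$, which is forced by~(\ref{Tnk-recurrence}) since the recurrence uses only $k$ as the coefficient of $G(n,k-1)$ and $G(n+1,1)=G(n,1)$ propagates the value $1$). Given $1\le k\le n+1$, apply~(\ref{Tnk-recurrence}), substitute the inductive hypothesis, and factor:
\begin{equation*}
G(n+1,k)=k\,G(n,k)+k\,G(n,k-1)=k\cdot k!\,S(n,k)+k\cdot(k-1)!\,S(n,k-1)=k!\bigl(k\,S(n,k)+S(n,k-1)\bigr).
\end{equation*}
By~(\ref{recu-Stirling}), the parenthesized expression equals $S(n+1,k)$, giving $G(n+1,k)=k!\,S(n+1,k)$ as desired.

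There is essentially no obstacle here; the only point requiring mild care is the boundary case $k=n+1$, where one must use $G(n,n+1)=0$ and $S(n,n+1)=0$ to see that the recurrence reduces to $G(n+1,n+1)=(n+1)\,G(n,n)=(n+1)!=(n+1)!\,S(n+1,n+1)$. All other cases follow uniformly from the computation above.
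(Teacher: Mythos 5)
Your proof is correct and is essentially the same as the paper's, which simply states that comparison of the recurrence~(\ref{Tnk-recurrence}) with~(\ref{recu-Stirling}) (together with the initial condition $G(1,1)=1$) gives the result; your induction, including the careful handling of the boundary cases $k=1$ and $k=n+1$, is just the explicit form of that comparison.
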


\section{On the expansion of $(Dy)^n(z)$}\label{Section-4}
For $n\geq 0$,
we define
\begin{equation*}\label{def-derivative-2}
(Dy)^n(z)=\sum_{k=0}^{n}H(n,k)y^{2n-2k}z^{2k+1}.
\end{equation*}
\begin{theorem}\label{thm1}
For $0\leq k\leq n$, we have $H(n,k)=B(n,k)$, where $B(n,k)$ is an Eulerian numbers of type $B$.
\end{theorem}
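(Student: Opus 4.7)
The plan is to mimic the argument used for Theorem~\ref{thm1} in Section~\ref{Section-3}, namely to derive a recurrence for $H(n,k)$ by direct differentiation and then match it against the recurrence~(\ref{Bnk-Euleriannum}) for $B(n,k)$.

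First I would verify the base case: $(Dy)^0(z)=z$, so $H(0,0)=1=B(0,0)$, and $H(0,k)=0$ for $k\geq 1$, matching the stated initial conditions for $B(n,k)$. For the inductive step, I would start from
$$(Dy)^{n+1}(z)=D\bigl(y\cdot (Dy)^n(z)\bigr)=D\!\left(\sum_{k=0}^n H(n,k)\, y^{2n-2k+1}z^{2k+1}\right),$$
and differentiate term by term using $D(y)=z^2$ and $D(z)=yz$. For each summand,
$$D\bigl(y^{2n-2k+1}z^{2k+1}\bigr)=(2n-2k+1)y^{2n-2k}z^{2k+3}+(2k+1)y^{2n-2k+2}z^{2k+1}.$$
The key observation is that both resulting monomials already have the correct shape $y^{\text{even}}z^{\text{odd}}$, so no application of the identity $1+y^2=z^2$ is needed; this makes the extraction of coefficients clean.

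Next I would reindex. The first piece has $y$-degree $2(n+1)-2(k+1)$ and $z$-degree $2(k+1)+1$, so it contributes to $H(n+1,k+1)$; the second piece contributes to $H(n+1,k)$. Collecting coefficients of $y^{2(n+1)-2k}z^{2k+1}$ gives
$$H(n+1,k)=(2k+1)H(n,k)+(2n-2k+3)H(n,k-1),$$
which is exactly the recurrence~(\ref{Bnk-Euleriannum}) satisfied by $B(n,k)$. Together with the matching initial conditions, induction on $n$ then yields $H(n,k)=B(n,k)$ for all $0\leq k\leq n$.

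I do not anticipate any serious obstacle: the only point requiring care is bookkeeping in the reindexing step, in particular confirming that the shift $k\mapsto k+1$ in the first piece produces the factor $2n-2k+3$ (rather than $2n-2k+1$) when rewritten in terms of the new index, so that the coefficient aligns with the Eulerian type $B$ recurrence and not some shifted variant.
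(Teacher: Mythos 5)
Your proposal is correct and follows essentially the same route as the paper: differentiate $y\cdot(Dy)^n(z)$ term by term using $D(y)=z^2$ and $D(z)=yz$, collect coefficients to obtain $H(n+1,k)=(2k+1)H(n,k)+(2n-2k+3)H(n,k-1)$, and match this recurrence and the initial conditions against those of $B(n,k)$. The reindexing you flag as the delicate point is handled correctly, and it is exactly the step the paper performs implicitly.
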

\begin{proof}
Clearly, $H(0,0)=1$. Note that $D(yz)=y^2z+z^3$. Then $H(1,0)=B(1,0)$ and $H(1,1)=B(1,1)$.
Note that
$$(Dy)(Dy)^n(z)=\sum_{k=0}^n(1+2k)H(n,k)y^{2n-2k+2}z^{2k+1}+
\sum_{k=0}^n(2n-2k+1)H(n,k)y^{2n-2k}z^{2k+3}.$$
Thus we obtain
$$H(n+1,k)=(1+2k)H(n,k)+(2n-2k+3)H(n,k-1).$$
Hence $H(n,k)$ satisfies the same recurrence and initial conditions as $B(n,k)$, so they agree.
\end{proof}

Let $(Dy)^n(z)=zf_n(y)$. Using $(Dy)^{n+1}(z)=D(yzf_n(y))$, we get
\begin{equation}\label{Rny-recu}
f_{n+1}(y)=(1+2y^2)f_n(y)+y(1+y^2)f'_n(y)
\end{equation}
with initial values $f_0(y)=1$.
The first few terms of $f_n(y)$ are given as follows:
\begin{align*}
f_1(y)&=1+2y^2,\\
f_2(y)&=1+8y^2+8y^4,\\
f_3(y)&=1+26y^2+72y^4+48y^6,\\
f_4(y)&=1+80y^2+464y^4+768y^6+384y^8.
\end{align*}

Set $f_n(y)=\sum_{k=0}^nf({n,k})y^{2k}$. By~(\ref{Rny-recu}), we obtain
\begin{equation}\label{fnk-recu}
f(n+1,k)=(1+2k)f(n,k)+2kf(n,k-1)\quad\textrm{for $0\leq k\leq n$,}
\end{equation}
with initial conditions $f(0,0)=1,f(0,k)=0$ for $k\geq 1$.
It should be noted~\cite[A145901]{Sloane} that $$(f(n,0),f(n,1),\ldots,f(n,n))$$
is the $f$-vectors of the simplicial complexes dual to the
permutohedra of type $B$.

\section{Polynomials related to $(yD)^n(y)$ and $(yD)^n(z)$}\label{Section-5}
For $n\geq 1$, we define
\begin{equation}\label{def-derivative}
(yD)^n(y)=\sum_{k=1}^{n}M({n,k})y^{2k-1}z^{2n-2k+2}\quad {\text and}\quad (yD)^n(z)=\sum_{k=1}^{n}N({n,k})y^{2k}z^{2n-2k+1}.
\end{equation}
\begin{theorem}
For $1\leq k\leq n$, we have
\begin{equation}\label{recurrence-1}
M({n+1,k})=(2k-1)M({n,k})+(2n-2k+4)M({n,k-1}),
\end{equation}
\begin{equation}\label{recurrence-11}
N({n+1,k})=2kN({n,k})+(2n-2k+3)N({n,k-1}).
\end{equation}
\end{theorem}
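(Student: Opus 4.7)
The plan is to prove both recurrences by a direct computation: apply the operator $yD$ to each of the two defining expansions, use the basic rules $D(y)=z^2$ and $D(z)=yz$, and then reindex to match coefficients.

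For the first recurrence, I would start with
$$(yD)^{n+1}(y)=yD\!\left(\sum_{k=1}^{n}M(n,k)\,y^{2k-1}z^{2n-2k+2}\right),$$
and compute
$$D\bigl(y^{2k-1}z^{2n-2k+2}\bigr)=(2k-1)y^{2k-2}z^{2n-2k+4}+(2n-2k+2)y^{2k}z^{2n-2k+2},$$
using $D(y)=z^2$ and $D(z)=yz$. Multiplying by $y$ gives two terms of the form $y^{2k-1}z^{2n-2k+4}$ and $y^{2k+1}z^{2n-2k+2}$. Comparing with the expansion
$$(yD)^{n+1}(y)=\sum_{j=1}^{n+1}M(n+1,j)\,y^{2j-1}z^{2(n+1)-2j+2},$$
the first term contributes with $j=k$ and the second with $j=k+1$; collecting the coefficient of $y^{2j-1}z^{2n-2j+4}$ yields exactly~(\ref{recurrence-1}).

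The second recurrence is entirely parallel. Applying $yD$ to $\sum_{k=1}^{n}N(n,k)y^{2k}z^{2n-2k+1}$ and using the same derivative rules produces
$$y\,D\bigl(y^{2k}z^{2n-2k+1}\bigr)=2k\,y^{2k}z^{2n-2k+3}+(2n-2k+1)\,y^{2k+2}z^{2n-2k+1}.$$
Matching with $\sum_{j=1}^{n+1}N(n+1,j)y^{2j}z^{2n-2j+3}$, the first term contributes to $j=k$ and the second to $j=k+1$, yielding~(\ref{recurrence-11}).

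There is no real obstacle here; the proof is a routine (but careful) index calculation. The only point that requires attention is making sure the two reindexings in each case — one shift from the $D(y)$ piece and one shift from the $D(z)$ piece — line up so that the coefficients of a fixed monomial $y^{2j-1}z^{2n-2j+4}$ (respectively $y^{2j}z^{2n-2j+3}$) are gathered correctly, including the edge cases $k=1$ and $k=n+1$, which contribute trivially because $M(n,0)=M(n,n+1)=0$ and similarly for $N$.
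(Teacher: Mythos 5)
Your proof is correct and is essentially the same as the paper's: both apply $yD$ termwise to the defining expansions using $D(y)=z^2$, $D(z)=yz$, and then reindex to read off the recurrences. The computations and index shifts all check out, including the boundary conventions $M(n,0)=N(n,0)=0$.
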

\begin{proof}
Note that
$$(yD)^{n+1}(y)=(yD)(yD)^n(y)=\sum_{k=1}^n(2k-1)M({n,k})y^{2k-1}z^{2n-2k+4}+
\sum_{k=1}^n(2n-2k+2)M({n,k})y^{2k+1}z^{2n-2k+2}.$$
Thus we obtain~(\ref{recurrence-1}).
Similarly, we get~(\ref{recurrence-11}).
\end{proof}

From~(\ref{recurrence-1}) and~(\ref{recurrence-11}), we immediately get a connection between $M({n,k})$ and $N({n,k})$.
\begin{corollary}
For $1\leq k\leq n$, we have
$M({n,k})=N({n,n-k+1})$.
\end{corollary}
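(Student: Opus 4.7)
The plan is to prove the identity $M(n,k)=N(n,n-k+1)$ by induction on $n$, using only the two recurrences~(\ref{recurrence-1}) and~(\ref{recurrence-11}) just established in the preceding theorem. The key observation is that the substitution $k \mapsto n-k+2$ turns the $N$-recurrence into the $M$-recurrence, up to a reindexing that is exactly accounted for by the reflection $k \mapsto n-k+1$.

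For the base case $n=1$, a direct computation gives $(yD)(y)=yz^{2}$ and $(yD)(z)=y^{2}z$, so that $M(1,1)=1=N(1,1)$, confirming $M(1,1)=N(1,1-1+1)$. For the inductive step, assume $M(n,k)=N(n,n-k+1)$ for all $1\le k\le n$, with the natural boundary convention $M(n,k)=N(n,k)=0$ whenever $k<1$ or $k>n$. Applying~(\ref{recurrence-11}) with index $n-k+2$ in place of $k$ gives
\begin{equation*}
N(n+1,n-k+2)=2(n-k+2)\,N(n,n-k+2)+(2k-1)\,N(n,n-k+1),
\end{equation*}
since $2n-2(n-k+2)+3=2k-1$. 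By the inductive hypothesis, $N(n,n-k+2)=M(n,k-1)$ and $N(n,n-k+1)=M(n,k)$, so
\begin{equation*}
N(n+1,n-k+2)=(2n-2k+4)\,M(n,k-1)+(2k-1)\,M(n,k),
\end{equation*}
which is precisely $M(n+1,k)$ by~(\ref{recurrence-1}). This completes the induction.

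The argument is essentially routine once the reindexing is set up correctly; the only mild obstacle is bookkeeping at the boundary indices $k=1$ and $k=n+1$, where one must verify that the vanishing of $M(n,0)$, $M(n,n+1)$, $N(n,0)$, and $N(n,n+1)$ is compatible on both sides. Since each recurrence annihilates terms with out-of-range indices via their coefficients or via the convention above, no separate verification is needed.
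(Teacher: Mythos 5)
Your proof is correct, and it is essentially the argument the paper has in mind: the paper simply asserts that the identity follows "immediately" from the recurrences~(\ref{recurrence-1}) and~(\ref{recurrence-11}), and your induction with the reindexing $k\mapsto n-k+2$ is the careful write-up of exactly that observation. The base case, the index bookkeeping, and the boundary conventions all check out.
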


Let
\begin{equation*}
M_n(x)=\sum_{k=1}^nM({n,k})x^k\quad {\text and}\quad N_n(x)=\sum_{k=1}^nN({n,k})x^k.
\end{equation*}
Then
\begin{equation}\label{MnxNnxSymm}
M_n(x)=x^{n+1}N_n\left(\frac{1}{x}\right).
\end{equation}

Let
\begin{equation}\label{Def-2}
R_n(y)=(yD)^n(y)=\sum_{k=0}^nR({n,k})y^{2k+1}\quad {\text and}\quad zT_n(y)=(yD)^n(z)=z\sum_{k=1}^nT({n,k})y^{2k}.
\end{equation}
Using~(\ref{Def-2}), it is easy to verify that
\begin{equation}\label{Rny-recurr}
R_{n+1}(y)=y(1+y^2)R'_n(y)
\end{equation}
and
\begin{equation}\label{Galton}
T_{n+1}(y)=y^2T_n(y)+y(1+y^2)T'_n(y).
\end{equation}
The first few terms of $R_n(y)$ and $T_n(y)$ are given as follows:
\begin{align*}
R_1(y)&=y+y^3,\\
R_2(y)&=y+4y^3+3y^5,\\
R_3(y)&=y+13y^3+27y^5+15y^7,\\
R_4(y)&=y+40y^3+174y^5+240y^7+105y^9,\\
R_5(y)&=y+121y^3+990y^5+2550y^7+2625y^9+945y^{11};\\
T_1(y)&=y^2,\\
T_2(y)&=2y^2+3y^4,\\
T_3(y)&=4y^2+18y^4+15y^6,\\
T_4(y)&=8y^2+84y^4+180y^6+105y^8\\
T_5(y)&=16y^2+360y^4+1500y^6+2100y^8+945y^{10}.
\end{align*}
Equating the coefficient of $y^{2k+1}$ on both sides of~(\ref{Rny-recurr}), we get
\begin{equation*}\label{Rny-recu-2}
R({n+1,k})=(2k+1)R({n,k})+(2k-1)R({n,k-1}).
\end{equation*}
Equating the coefficient of $y^{2k}$ on both sides of~(\ref{Galton}), we get
\begin{equation*}\label{GaltonTri}
T({n+1,k})=2kT({n,k})+(2k-1)T({n,k-1}).
\end{equation*}
Clearly, $R({n,n})=T({n,n})=(2n-1)!!$, where $(2n-1)!!$ is the {\it double factorial number}.
It should be noted that the triangular arrays $\{R({n,k})\}_{n\geq 1,0\leq k\leq n}$ and $\{T({n,k})\}_{n\geq 1,1\leq k\leq n}$ are both {\it Galton triangles} (see~\cite[\textsf{A187075 }]{Sloane} for instance), and it has been studied by Neuwirth~\cite{Neuwirth01}.
We now present the following result.
\begin{theorem}\label{RnyMny}
For $n\geq 1$, we have
\begin{equation*}
R_n(y)=y^{2n+1}N_n\left(\frac{1+y^2}{y^2}\right)\quad {\text and}\quad T_n(y)=
(1+y^2)^{n}N_n\left(\frac{y^2}{1+y^2}\right).
\end{equation*}
\end{theorem}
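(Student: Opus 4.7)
The plan is to reduce both identities to routine rewriting once we use the fundamental relation $z^2=1+y^2$. The starting point is the definition \eqref{def-derivative}, which gives
$$R_n(y)=\sum_{k=1}^n M(n,k)\,y^{2k-1}z^{2n-2k+2},\qquad zT_n(y)=z\sum_{k=1}^n N(n,k)\,y^{2k}z^{2n-2k}.$$
Substituting $z^2=1+y^2$ turns these into pure polynomials in $y$:
$$R_n(y)=\sum_{k=1}^n M(n,k)\,y^{2k-1}(1+y^2)^{n-k+1},\qquad T_n(y)=\sum_{k=1}^n N(n,k)\,y^{2k}(1+y^2)^{n-k}.$$

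For the $T_n(y)$ identity, the proof is immediate: factor $(1+y^2)^n$ out of the sum to obtain
$$T_n(y)=(1+y^2)^n\sum_{k=1}^n N(n,k)\left(\frac{y^2}{1+y^2}\right)^{k}=(1+y^2)^n\,N_n\!\left(\frac{y^2}{1+y^2}\right).$$

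For the $R_n(y)$ identity, I would invoke the symmetry $M(n,k)=N(n,n-k+1)$ from the Corollary (equivalently, relation \eqref{MnxNnxSymm}) to replace the $M$-coefficients by $N$-coefficients, then reindex by setting $j=n-k+1$. This yields
$$R_n(y)=\sum_{j=1}^n N(n,j)\,y^{2n-2j+1}(1+y^2)^{j}=y^{2n+1}\sum_{j=1}^n N(n,j)\left(\frac{1+y^2}{y^2}\right)^{j}=y^{2n+1}\,N_n\!\left(\frac{1+y^2}{y^2}\right),$$
which completes the proof.

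There is no real obstacle here beyond bookkeeping; the only step that requires care is the reindexing after applying $M(n,k)=N(n,n-k+1)$, and one should verify that the substitution matches the exponent of $y^{2n+1}$ pulled out in the second identity. Both substitutions are valid since $(1+y^2)/y^2$ and $y^2/(1+y^2)$ are reciprocal, so the passage between $R_n$ and $T_n$ parallels exactly the symmetry between $M_n(x)$ and $N_n(x)$ recorded in \eqref{MnxNnxSymm}.
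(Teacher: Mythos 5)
Your proposal is correct and follows essentially the same route as the paper: substitute $z^2=1+y^2$ into the definition \eqref{def-derivative}, factor the resulting sums, and convert the $M$-sum into an $N$-evaluation via the symmetry between $M$ and $N$. The only cosmetic difference is that you apply the coefficient identity $M(n,k)=N(n,n-k+1)$ and reindex, whereas the paper applies the equivalent polynomial identity $M_n(x)=x^{n+1}N_n(1/x)$ from \eqref{MnxNnxSymm}; the computations are identical.
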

\begin{proof}
Note that $z^2=y^2+1$. Combining~(\ref{def-derivative}) and~(\ref{Def-2}), we obtain
$$R_n(y)=\sum_{k=1}^{n}M({n,k})y^{2k-1}(y^2+1)^{n-k+1}=
y^{-1}(1+y^2)^{n+1}M_n\left(\frac{y^2}{1+y^2}\right),$$
and
$$T_n(y)=\sum_{k=1}^{n}N({n,k})y^{2k}(y^2+1)^{n-k}=
(1+y^2)^{n}N_n\left(\frac{y^2}{1+y^2}\right).$$
Using~(\ref{MnxNnxSymm}), we get
$$(1+y^2)^{n+1}M_n\left(\frac{y^2}{1+y^2}\right)=y^{2n+2}N_n\left(\frac{1+y^2}{y^2}\right)$$
as desired.
\end{proof}

By Theorem~\ref{RnyMny}, we get $R_n(1)=N_n(2)$ and $T_n(1)=2^nN_n(\frac{1}{2})$.
It follows from~(\ref{recurrence-11}) that
\begin{equation}\label{recu-Nnx}
N_{n+1}(x)=(2n+1)xN_n(x)+2x(1-x)N'_n(x)
\end{equation}
with initial values $N_0(x)=1$. The first few terms of $N_n(x)$ can be computed directly as follows:
\begin{align*}
N_1(x)&=x,\\
N_2(x)&=2x+x^2,\\
N_3(x)&=4x+10x^2+x^3,\\
N_4(x)&=8x+60x^2+36x^3+x^4,\\
N_5(x)&=16x+296x^2+516x^3+116x^4+x^5.
\end{align*}
In particular, $N({n,1})=2^{n-1}$, $N({n,n})=1$ and $N_n(1)=(2n-1)!!$ for $n\geq 1$.
In the following discussion, we will study some properties of $N_n(x)$.

The numbers $N({n,k})$ arise often in combinatorics and other branches of mathematics (see~\cite{Lehmer85} for instance). A {\it perfect matching} of $[2n]$ is a partition of $[2n]$ into $n$ blocks of size $2$. Using~(\ref{recurrence-11}) and analyzing the placement of $2n-1$ and $2n$, it is easy to verify that the number $N({n,k})$ counts perfect matchings of $[2n]$ with the restriction that only $k$ matching pairs have odd smaller entries (see~\cite[\textsf{A185411}]{Sloane}).
It is well known~\cite[\textsf{A156919}]{Sloane} that
\begin{equation}\label{Explicit}
N_n(x)=\sum_{k=1}^n2^{n-2k}\binom{2k}{k}k!S(n,k)x^k(1-x)^{n-k} \quad {\text for}\quad n\geq 1,
\end{equation}
where $S(n,k)$ is {\it the Stirling number of the second kind}.
By~(\ref{Explicit}), we get
$$N({n,k})=\sum_{i=1}^k(-1)^{k-i}2^{n-2i}\binom{2i}{i}\binom{n-i}{k-i}i!S(n,i).$$

Let
$$N(x,t)=\sum_{n\geq 0}N_n(x)\frac{t^n}{n!}.$$
Using~(\ref{recu-Nnx}), the formal power series $N(x,t)$ satisfies the following
partial differential equation:
\begin{equation*}\label{diff-eq}
(1-2xt)\frac{\partial N(x,t)}{\partial t}-2x(1-x)\frac{\partial N(x,t)}{\partial x}=xN(x,t).
\end{equation*}
By {\it the method of characteristics}~\cite{Wilf}, it is easy to derive an explicit form:
\begin{equation*}
N(x,t)=e^{xt}\sqrt{\frac{1-x}{e^{2xt}-xe^{2t}}}.
\end{equation*}
Hence
\begin{equation}\label{N2xt}
N^2(x,t)=\frac{1-x}{1-xe^{2t(1-x)}}.
\end{equation}
Combining~(\ref{Axz}) and~(\ref{N2xt}), we get the following result.
\begin{theorem}
For $n\geq 0$, we have
\begin{equation*}
\sum_{k=0}^n\binom{n}{k}N_k(x)N_{n-k}(x)=2^nA_n(x).
\end{equation*}
\end{theorem}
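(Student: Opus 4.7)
The plan is to extract the identity by comparing coefficients in two exponential generating functions, both of which have already been computed in the paper. Specifically, I would use formulas (\ref{Axz}) and (\ref{N2xt}) to observe that
\[
N^2(x,t) \;=\; \frac{1-x}{1-xe^{2t(1-x)}} \;=\; A(x,2t),
\]
which is the key identity driving the result.

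First I would unpack the right-hand side. Since $A(x,t)=1+\sum_{n\ge 1}A_n(x)\tfrac{t^n}{n!}$ with the convention $A_0(x)=1$, substituting $2t$ for $t$ gives
\[
A(x,2t)=\sum_{n\ge 0}2^nA_n(x)\frac{t^n}{n!}.
\]
Next I would unpack the left-hand side via the Cauchy product formula for exponential generating functions: writing $N(x,t)=\sum_{n\ge 0}N_n(x)\tfrac{t^n}{n!}$ with $N_0(x)=1$, one has
\[
N^2(x,t)=\sum_{n\ge 0}\left(\sum_{k=0}^n\binom{n}{k}N_k(x)N_{n-k}(x)\right)\frac{t^n}{n!}.
\]
Equating the coefficient of $t^n/n!$ in the two expressions for $N^2(x,t)=A(x,2t)$ then yields the claimed identity. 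The $n=0$ case is consistent since both sides equal $1$.

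There is really no significant obstacle here; the work has been front-loaded into the derivations of (\ref{Axz}) and (\ref{N2xt}). The only mild care needed is bookkeeping: making sure the constant terms $A_0(x)=N_0(x)=1$ are handled correctly so the sums start at $n=0$, and noting that the substitution $t\mapsto 2t$ converts $t^n/n!$ into $2^n\,t^n/n!$, which is precisely what produces the factor $2^n$ on the right-hand side.
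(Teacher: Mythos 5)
Your proposal is correct and matches the paper's argument exactly: the paper derives $N^2(x,t)=\frac{1-x}{1-xe^{2t(1-x)}}=A(x,2t)$ and obtains the identity by comparing exponential generating function coefficients, just as you do. No issues.
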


In the following of this section, we will give both central and local limit theorems for the coefficients of $N_n(x)$.
As an application of a result~\cite[Theorem 2]{Ma08} on polynomials with only real zeros,
the recurrence relation~(\ref{recu-Nnx}) enables us to show that the polynomials $\{N_n(x)\}_{n\geq 1}$
form a {\it Sturm sequence}.
\begin{proposition}\label{RealZeros}
For $n\geq 2$, the polynomial $N_n(x)$ has $n$ distinct real zeros, separated by the zeros of $N_{n-1}(x)$.
\end{proposition}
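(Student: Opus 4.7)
The plan is to apply the criterion \cite[Theorem 2]{Ma08}, which starts from a recurrence of the form $P_{n+1}(x)=u_n(x)P_n(x)+v_n(x)P_n'(x)$ and, under a sign condition on $v_n$ over the interval containing the zeros of $P_n$, concludes that the polynomials form a Sturm sequence. The recurrence~(\ref{recu-Nnx}) has precisely this shape with $u_n(x)=(2n+1)x$ and $v_n(x)=2x(1-x)$, so the proof reduces to verifying the hypotheses of that theorem for this data.

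As preparation, I would first show by induction that every zero of $N_n(x)$ lies in $(-\infty,0]$: the value $0$ is a zero because $N_n(0)=0$ for $n\geq 1$, and the remaining $n-1$ zeros are negative since $N_n(x)$ has only nonnegative coefficients. This localization is what makes the hypothesis applicable: on $(-\infty,0)$, where the nontrivial zeros live, $v_n(x)=2x(1-x)$ has a strict sign, namely $v_n(x)<0$, so it is nonvanishing throughout.

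The main inductive step is a sign analysis. Evaluating~(\ref{recu-Nnx}) at a negative zero $r$ of $N_n$ gives $N_{n+1}(r)=2r(1-r)N_n'(r)$; since $2r(1-r)<0$ uniformly and the $n-1$ negative zeros $r_1>r_2>\cdots>r_{n-1}$ of $N_n$ are simple by the inductive hypothesis, $N_n'$, and hence $N_{n+1}$, alternates in sign along this list. This produces $n-2$ zeros of $N_{n+1}$ strictly interlacing the $r_i$ in the intervals $(r_{i+1},r_i)$. The two remaining negative zeros are located by comparing signs at the extremes: the linear coefficient of $N_{n+1}$, read off from~(\ref{recu-Nnx}), equals $2^n>0$, which together with the sign of $N_{n+1}(r_1)$ forces a zero in $(r_1,0)$; and the behavior of $N_{n+1}(x)$ as $x\to-\infty$, governed by its leading term $x^{n+1}$, combined with the sign of $N_{n+1}(r_{n-1})$, forces the final zero in $(-\infty,r_{n-1})$.

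The delicate point, which I expect to be the main obstacle, is the bookkeeping at $x=0$: both $N_n$ and $N_{n-1}$ vanish there, so literal strict interlacing fails at the right endpoint. This is resolved either by passing to the reduced polynomials $N_n(x)/x$, whose zeros are strictly separated by those of $N_{n-1}(x)/x$, or by invoking \cite[Theorem 2]{Ma08} in a formulation that accommodates a common boundary zero.
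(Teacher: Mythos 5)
Your proposal is correct and follows essentially the same route as the paper, which likewise obtains the result by applying \cite[Theorem 2]{Ma08} to the recurrence~(\ref{recu-Nnx}) with $u_n(x)=(2n+1)x$ and $v_n(x)=2x(1-x)$ (the paper gives no further detail). Your additional sign-change and interlacing analysis, including the treatment of the common zero at $x=0$, is a sound verification of the hypotheses rather than a different method.
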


Let $\{a(n,k)\}_{0\leq k\leq n}$ be a sequence of positive real numbers. It has no {\it internal zeros} if and only if there exist no indices $i<j<k$ with $a(n,i)a(n,k)\neq0$ but $a(n,j)=0$.
Let $A_n=\sum_{k=0}^na(n,k)$. We say the sequence $\{a(n,k)\}$ satisfies a central limit theorem with mean $\mu_n$ and variance $\sigma_n^2$ provided
$$\limsup_{n\rightarrow+\infty,x\in\mathbb{R}}\left|\sum_{k=0}^{\mu_n+x\sigma_n}\frac{a(n,k)}{A_n} -\frac{1}{\sqrt{2\pi}}\int_{-\infty}^xe^{-\frac{t^2}{2}}dt\right|=0.$$
The sequence satisfies a local limit theorem on $B\in\mathbb{R}$ if
$$\limsup_{n\rightarrow+\infty,x\in B}\left|\frac{\sigma_na(n,\mu_n+x\sigma_n)}{A_n} -\frac{1}{\sqrt{2\pi}}e^{-\frac{x^2}{2}}\right|=0.$$
Recall the following Bender's theorem.
\begin{theorem}\cite{Bender73}\label{bender}
Let $\{P_n\}_{n\geq1}$ be a sequence of polynomials with only real zeros. The sequence of the coefficients of $P_n$ satisfies a central limit theorem with $$\mu_n=\frac{P_n'(1)}{P_n(1)} \quad\textrm{and}\quad
\sigma_n^2=\frac{P_n'(1)}{P_n(1)}+\frac{P_n''(1)}{P_n(1)}-\left(\frac{P_n'(1)}{P_n(1)}\right)^2,$$
provided that $\lim\limits_{n\to\infty}\sigma_n^2=+\infty$.
If the sequence of coefficients of each $P_n(x)$ has no internal zeros, then the sequence of coefficients satisfies a local limit theorem.
\end{theorem}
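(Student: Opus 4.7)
The plan is to prove Bender's theorem by the standard probabilistic route: realize $P_n(x)/P_n(1)$ as the probability generating function of a sum of independent Bernoulli random variables, then invoke the Lindeberg--Feller central limit theorem and Gnedenko's local limit theorem.

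First, since the coefficient sequence under consideration is nonnegative and $P_n$ has only real zeros, all zeros must be nonpositive, so I factor
$$P_n(x) = c_n\prod_{i=1}^{d_n}(x+r_{n,i}),\qquad r_{n,i}\geq 0,$$
and normalize to obtain
$$\frac{P_n(x)}{P_n(1)} \;=\; \prod_{i=1}^{d_n}\frac{x+r_{n,i}}{1+r_{n,i}} \;=\; \prod_{i=1}^{d_n}\bigl(p_{n,i}x+(1-p_{n,i})\bigr),$$
with $p_{n,i}=1/(1+r_{n,i})\in(0,1]$. This is the PGF of $X_n=\sum_{i=1}^{d_n}Y_{n,i}$, where the $Y_{n,i}$ are independent Bernoulli$(p_{n,i})$ variables. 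In particular $\Pr(X_n=k)=a(n,k)/A_n$.

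Next, differentiating $\log P_n(x)=\log c_n+\sum_i\log(x+r_{n,i})$ once and twice at $x=1$ identifies the mean and variance of $X_n$ as exactly the quantities $\mu_n$ and $\sigma_n^2$ in the theorem statement; the computation $\sigma_n^2=\sum_i p_{n,i}(1-p_{n,i})$ drops out of the identity $P_n''(1)/P_n(1)-\mu_n^2=-\sum_i p_{n,i}^2$. For the central limit conclusion I would then verify the Lindeberg condition for the triangular array $\{Y_{n,i}\}$: since each summand is bounded by $1$, the truncated second moment $E[Y_{n,i}^2\mathbf{1}_{|Y_{n,i}-p_{n,i}|>\varepsilon\sigma_n}]$ vanishes as soon as $\varepsilon\sigma_n\geq 1$, so the hypothesis $\sigma_n^2\to\infty$ suffices. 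Lindeberg--Feller then gives $(X_n-\mu_n)/\sigma_n\Rightarrow N(0,1)$, which is the stated central limit theorem for the coefficients.

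The main obstacle, and the step requiring the most care, is upgrading the distributional convergence to a local limit theorem. Here I would appeal to Gnedenko's local limit theorem for lattice sums of independent variables. The ``no internal zeros'' hypothesis on the coefficient sequence of each $P_n$ translates, via $\Pr(X_n=k)=a(n,k)/A_n$, to the statement that the support of $X_n$ is a contiguous block of integers; this is exactly the aperiodicity/maximal-span-$1$ condition that Gnedenko's theorem needs in order to promote the Gaussian approximation of the distribution function to a uniform Gaussian approximation of the point probabilities. Combining Gnedenko's estimate with the CLT just proved yields
$$\sup_{x\in B}\left|\sigma_n\cdot\frac{a(n,\mu_n+x\sigma_n)}{A_n}-\frac{1}{\sqrt{2\pi}}e^{-x^2/2}\right|\longrightarrow 0$$
on any bounded $B\subset\mathbb{R}$, completing the proof.
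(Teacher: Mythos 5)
The paper does not prove this statement at all: it is quoted verbatim from Bender's 1973 paper and used as a black box, so there is no in-text proof to compare against. Judged on its own terms, your outline is the standard modern probabilistic route to Bender's theorem (Harper's method): since the coefficients are positive and $P_n$ is real-rooted, all zeros are nonpositive, $P_n(x)/P_n(1)$ factors as a product of Bernoulli probability generating functions, the identities $\mu_n=\sum_i p_{n,i}$ and $\sigma_n^2=\sum_i p_{n,i}(1-p_{n,i})$ match the stated formulas, and the Lindeberg condition is trivially verified for uniformly bounded summands once $\sigma_n\to\infty$. That part is correct and complete (modulo invoking P\'olya's theorem to pass from weak convergence to the uniform convergence of distribution functions that the statement actually asserts).

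The one step that does not go through as written is the appeal to ``Gnedenko's local limit theorem.'' Gnedenko's theorem is for sums of \emph{identically distributed} lattice variables; here you have a triangular array of independent Bernoulli variables with varying parameters $p_{n,i}$, so the theorem you cite does not apply directly. The local limit theorem is nevertheless true in this setting, but you must prove it, e.g.\ by the Fourier inversion argument together with the bound $\bigl|1-p+pe^{i\theta}\bigr|\leq\exp\bigl(-cp(1-p)\theta^2\bigr)$ for $|\theta|\leq\pi$, which yields $|\phi_{X_n}(\theta)|\leq e^{-c\sigma_n^2\theta^2}$ and lets you control the integral away from $\theta=0$. Note also that your translation of the ``no internal zeros'' hypothesis into an aperiodicity condition is somewhat beside the point: for a real-rooted polynomial with nonnegative coefficients the coefficient sequence automatically has no internal zeros (it is $x^{m}$ times a polynomial with strictly positive coefficients), and the span-$1$ condition for the Bernoulli sum is automatic as soon as some $p_{n,i}\in(0,1)$, i.e.\ as soon as $\sigma_n^2>0$. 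That hypothesis is present in Bender's statement because his theorem is formulated more generally than the real-rooted case; in your factorization argument it plays no role. So: right architecture, correct CLT half, but the LLT half needs an array version of the local limit theorem rather than a citation to Gnedenko.
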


Combining Proposition~\ref{RealZeros} and Theorem~\ref{bender}, we obtain the following result.
\begin{theorem}\label{mthm-2}
The sequence $\{N({n,k})\}_{1\leq k\leq n}$ satisfies a central and a local limit theorem with
$\mu_n={(2n+1)}/{4}$ and $\sigma_n^2=(2n+1)/24,$ where $n\geq 4$.
\end{theorem}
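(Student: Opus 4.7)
The plan is to apply Bender's theorem (Theorem~\ref{bender}) to the polynomials $N_n(x)$, whose real-rootedness is supplied by Proposition~\ref{RealZeros} and whose coefficients $N(n,k)$ are strictly positive for $1\le k\le n$ by the recurrence~(\ref{recurrence-11}), so the no-internal-zeros hypothesis is immediate. What remains is to compute the exact mean $\mu_n^{\ast}:=N_n'(1)/N_n(1)$ and variance $\sigma_n^{2,\ast}:=\mu_n^{\ast}+N_n''(1)/N_n(1)-(\mu_n^{\ast})^2$, to verify that they are asymptotic to the stated $\mu_n=(2n+1)/4$ and $\sigma_n^2=(2n+1)/24$, and to check that $\sigma_n^{2,\ast}\to\infty$.

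First I would extract $\mu_n^{\ast}$ from recurrence~(\ref{recu-Nnx}). Setting $x=1$ yields $N_n(1)=(2n-1)!!$. Differentiating once and setting $x=1$ gives $N_{n+1}'(1)=(2n+1)N_n(1)+(2n-1)N_n'(1)$, hence $\mu_{n+1}^{\ast}=1+\tfrac{2n-1}{2n+1}\mu_n^{\ast}$. The substitution $a_n=(2n-1)\mu_n^{\ast}$ telescopes the relation $a_{n+1}-a_n=2n+1$ to $a_n=n^2$, producing the closed form
\[
\mu_n^{\ast}=\frac{n^2}{2n-1}=\frac{2n+1}{4}+\frac{1}{4(2n-1)}.
\]

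Second, differentiating recurrence~(\ref{recu-Nnx}) twice and setting $x=1$ yields $N_{n+1}''(1)=(4n-2)N_n'(1)+(2n-3)N_n''(1)$. Using the closed form for $\mu_n^{\ast}$ this becomes $(2n+1)M_{n+1}^{\ast}=2n^2+(2n-3)M_n^{\ast}$, where $M_n^{\ast}=N_n''(1)/N_n(1)$. The ansatz $M_n^{\ast}=P(n)/((2n-1)(2n-3))$ reduces the recurrence to $P(n+1)-P(n)=4n^3-2n^2$, which sums to $P(n)=(n-1)n(3n^2-5n+1)/3$. Combining $\mu_n^{\ast}$ and $M_n^{\ast}$ and clearing a common denominator then collapses to
\[
\sigma_n^{2,\ast}=\frac{(n-1)n(2n^2-2n-1)}{3(2n-1)^2(2n-3)}=\frac{2n+1}{24}+\frac{1}{8(2n-1)^2(2n-3)}.
\]

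Since $|\mu_n^{\ast}-(2n+1)/4|=O(1/n)=o(\sigma_n^{\ast})$ and $\sigma_n^{2,\ast}/((2n+1)/24)\to 1$, the central and local limit theorems with the true parameters $(\mu_n^{\ast},\sigma_n^{2,\ast})$ are equivalent to the same theorems with the stated parameters. Because $\sigma_n^{2,\ast}\to\infty$, Bender's theorem applies and, together with the absence of internal zeros, yields both the CLT and the LLT. The main obstacle is the exact simplification leading to $\sigma_n^{2,\ast}=(2n+1)/24+1/(8(2n-1)^2(2n-3))$: identifying the correct ansatz $M_n^{\ast}=P(n)/((2n-1)(2n-3))$ is the key trick, after which the telescoping sum and algebraic consolidation are mechanical but require careful bookkeeping.
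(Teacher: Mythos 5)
Your proposal is correct and follows the paper's strategy in outline --- apply Bender's theorem after extracting $N_n'(1)$ and $N_n''(1)$ from the recurrence~(\ref{recu-Nnx}) --- but your execution diverges in a way that actually repairs the paper's argument. The paper solves $x_{n+1}=(2n+1)!!+(2n-1)x_n$ by exhibiting the particular solution $x_n=(2n+1)!!/4$ and asserting it holds for $n\geq 2$; this drops the homogeneous term and fails the initial condition (e.g.\ $N_2'(1)=4\neq 15/4$). The correct solution is $x_n=\bigl((2n+1)!!+(2n-3)!!\bigr)/4$, which is exactly your $\mu_n^{\ast}=n^2/(2n-1)=(2n+1)/4+1/(4(2n-1))$; likewise the paper's closed form for $N_n''(1)$ is only a particular solution, so its stated $\mu_n$ and $\sigma_n^2$ are asymptotic rather than exact values of $P_n'(1)/P_n(1)$ and the associated variance. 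Your version supplies the two ingredients the paper omits: the observation that replacing the exact $(\mu_n^{\ast},\sigma_n^{2,\ast})$ by $\bigl((2n+1)/4,(2n+1)/24\bigr)$ is legitimate because the discrepancies are $o(\sigma_n)$ and $o(\sigma_n^2)$ respectively, and the explicit verification (positivity of all $N(n,k)$ via~(\ref{recurrence-11})) of the no-internal-zeros hypothesis needed for the local limit theorem. I checked your closed forms $M_n^{\ast}=(n-1)n(3n^2-5n+1)/\bigl(3(2n-1)(2n-3)\bigr)$ and $\sigma_n^{2,\ast}=(2n+1)/24+1/\bigl(8(2n-1)^2(2n-3)\bigr)$ against small cases (e.g.\ $N_3''(1)=26$) and they are right. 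In short: same skeleton, but your proof is the one that actually closes.
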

\begin{proof}
By differentiating~(\ref{recu-Nnx}), we obtain the recurrence
$x_{n+1}=(2n+1)!!+(2n-1)x_n$
for $x_n=N_n'(1)$, and this has the solution
$x_n=(2n+1)!!/4$ for $n\geq 2
$. By Theorem~\ref{bender}, we have $\mu_n={(2n+1)}/{4}$.
Another differentiation leads to the recurrence
$$y_{n+1}=\frac{(2n+1)!!}{4}(4n-2)+(2n-3)y_n$$
for $y_n=N_n''(1)$. Set $y_n=(2n-1)!!(an^2+bn+c)$ and solve for $a,b,c$ to get
$$y_n={(2n-1)!!}(12n^2-8n-7)/{48}$$
for $n\geq 4$.
Hence $\sigma_n^2=(2n+1)/24$.
Thus $\lim\limits_{n\to\infty}\sigma_n^2=+\infty$ as desired.
\end{proof}

Let $P(x)=\sum_{i=0}^na_ix^i$ be a polynomial. Let $m$ be an index such that $a_m=\max_{0\leq i\leq n}a_i$.
Darroch~\cite{Darroch64} showed that if $P(x)\in\rz(-\infty,0]$, then
$$\left\lfloor{\frac{P_n'(1)}{P_n(1)}}\right\rfloor\leq m\leq \left\lceil{\frac{P_n'(1)}{P_n(1)}}\right\rceil.$$
So the following result is immediate.
\begin{corollary}
Let $i={\lrf{(2n+1)}/{4}}$ or $i={\lrc{(2n+1)}/{4}}$. Then $N({n,i})=\max_{1\leq k\leq n}N({n,k})$.
\end{corollary}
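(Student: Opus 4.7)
The plan is to apply Darroch's inequality, quoted just before the corollary, directly to the polynomial $N_n(x)$. The key point is that both hypotheses of Darroch's result have essentially already been verified in the paper, so the proof reduces to recording two observations and substituting.

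First I would note that $N_n(x) \in \rz(-\infty,0]$. Proposition~\ref{RealZeros} gives that $N_n(x)$ has only real zeros, and the recurrence~(\ref{recu-Nnx}) together with the initial conditions makes it transparent (by an easy induction on $n$) that every coefficient $N(n,k)$ with $1\leq k\leq n$ is positive; hence none of the zeros can be positive. Since $N(n,0)=0$, the constant term vanishes, so $0$ is a zero and all remaining zeros lie in $(-\infty,0)$. This puts $N_n(x)$ exactly in the class to which Darroch's theorem applies.

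Second, I would read off $N_n'(1)/N_n(1)$ from the computation already done inside the proof of Theorem~\ref{mthm-2}. There it is shown that $N_n'(1)=(2n+1)!!/4$ for $n\geq 2$, and since $N_n(1)=(2n-1)!!$, we obtain
\[
\frac{N_n'(1)}{N_n(1)} \;=\; \frac{(2n+1)!!/4}{(2n-1)!!} \;=\; \frac{2n+1}{4}.
\]
Darroch's inequality then says that any index $m$ maximizing $N(n,k)$ satisfies
\[
\left\lfloor \frac{2n+1}{4}\right\rfloor \;\leq\; m \;\leq\; \left\lceil \frac{2n+1}{4}\right\rceil,
\]
which is precisely the claim of the corollary.

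The only real content is checking non-positivity of the zeros, and that is essentially automatic from positivity of coefficients; everything else is bookkeeping pulled from earlier statements. I do not anticipate any genuine obstacle, since the heavy lifting (real-rootedness via Proposition~\ref{RealZeros}, and the evaluation of $N_n'(1)$ and $N_n(1)$) has already been carried out.
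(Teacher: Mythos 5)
Your proposal is correct and is essentially the paper's own argument: the paper declares the corollary ``immediate'' from Darroch's inequality combined with Proposition~\ref{RealZeros} and the value $\mu_n=N_n'(1)/N_n(1)=(2n+1)/4$ computed in the proof of Theorem~\ref{mthm-2}. Your only addition is to spell out why the zeros lie in $(-\infty,0]$ (positive coefficients plus real-rootedness), which is a worthwhile but routine check.
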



\end{document}